\documentclass[11pt]{article}
\usepackage[english]{babel}
\usepackage{times}
\usepackage{draftcopy}
\usepackage{amsmath,amstext,amssymb,amsthm, amsfonts}
\usepackage[dvips]{graphics,color}
\setlength{\oddsidemargin}{0in} \setlength{\topmargin}{-.5in}
\setlength{\textheight}{9in} \setlength{\textwidth}{6.5in}

\newcommand{\bi}{\begin{itemize}}  
\newcommand{\ei}{\end{itemize}}     
\newcommand{\bc}{\begin{center}}  
\newcommand{\ec}{\end{center}}     

\newcommand{\ls}[1]
   {\dimen0=\fontdimen6\the\font \lineskip=#1\dimen0
   \advance\lineskip.5\fontdimen5\the\font \advance\lineskip-\dimen0
   \lineskiplimit=.9\lineskip \baselineskip=\lineskip
   \advance\baselineskip\dimen0 \normallineskip\lineskip
   \normallineskiplimit\lineskiplimit \normalbaselineskip\baselineskip
   \ignorespaces }

\numberwithin{equation}{section}

%

\vfuzz2pt 
\hfuzz2pt 

\newtheorem{theorem}{Theorem}[section]

\theoremstyle{definition}

\newcommand{\R}{\mathbb{R}}

\newcommand{\I}{\mathbf{I}}
\newcommand{\K}{\mathcal{K}}

\newcommand{\X}{\mathbb{X}}
\newcommand{\loc}{{\rm loc}}

\newcommand{\vsup} {\mathop{\rm lim\,sup}}
\newcommand{\isup} {\mathop{\rm lim\,inf}}
\newcommand{\slim} {\mathop{\rm lim\,sup}}
\newcommand{\ilim} {\mathop{\rm lim\,inf}}
\newcommand{\dist}{\operatorname{dist}}

\title{Uniform Global Attractors for Non-Autonomous Dissipative Dynamical Systems}
%
%
%
%
%

\begin{document}
\maketitle

\centerline{\scshape Michael Zgurovsky}
\medskip
{\footnotesize
 \centerline{National Technical University of Ukraine
``Kyiv Polytechnic Institute'', }
   \centerline{ Peremogy ave., 37, build, 1, 03056,
Kyiv, Ukraine,}
} 

\medskip

\centerline{\scshape Mark Gluzman}
\medskip
{\footnotesize
 \centerline{ Department of Applied Physics and Applied Mathematics, Columbia University,}
   \centerline{New York, NY 10027, USA}
}

\medskip

\centerline{\scshape Nataliia Gorban}
\medskip
{\footnotesize
 \centerline{ Institute for Applied System Analysis,
National Technical University of Ukraine ``Kyiv Polytechnic
Institute'', }
   \centerline{Peremogy ave., 37, build, 35, 03056, Kyiv, Ukraine}
  }
\medskip

\centerline{\scshape Pavlo Kasyanov {$^*$}}
\medskip
{\footnotesize
\centerline{ Institute for Applied System Analysis,
National Technical University of Ukraine ``Kyiv Polytechnic
Institute'', }
   \centerline{Peremogy ave., 37, build, 35, 03056, Kyiv, Ukraine}
  }

   \medskip

\centerline{\scshape Liliia Paliichuk}
\medskip
{\footnotesize
\centerline{ Institute for Applied System Analysis,
National Technical University of Ukraine ``Kyiv Polytechnic
Institute'', }
   \centerline{Peremogy ave., 37, build, 35, 03056, Kyiv, Ukraine}
  }

 \medskip

\centerline{\scshape Olha Khomenko}
\medskip
{\footnotesize
 \centerline{ Institute for Applied System Analysis,
National Technical University of Ukraine ``Kyiv Polytechnic
Institute'', }
   \centerline{Peremogy ave., 37, build, 35, 03056, Kyiv, Ukraine}
  }

\bigskip


\begin{abstract}
In this paper we consider sufficient conditions for the existence of uniform compact global attractor for non-autonomous dynamical systems in special classes of infinite-dimensional phase spaces. The obtained generalizations allow us to avoid the restrictive compactness assumptions on the space of shifts of non-autonomous terms in particular evolution problems. The results are applied to several evolution inclusions.
\end{abstract}

\section{Introduction}

The standard scheme of investigation of uniform the long-time behavior for all solutions of non-autonomous problems
covers non-autonomous problems of the form
\begin{equation}\label{eq:8}
\partial_t
u(t)=A_{\sigma(t)}(u(t)),
\end{equation}
where $\sigma(s),$ $s\ge 0$, is a functional parameter called the time symbol of equation (\ref{eq:8}) ($t$ is replaced by $s$). In applications to
mathematical physics equations, a function $\sigma(s)$ consists of all time-dependent terms of the equation under consideration: external forces,
parameters of mediums, interaction functions, control functions, etc; Chepyzhov and Vishik \cite{VCh0,VCh,VCh1}; Sell \cite{Sell}; Zgurovsky et al.
\cite{ZMK3} and references therein; see also Hale \cite{Hale}; Ladyzhenskaya \cite{Lad3}; Mel'nik and Valero \cite{MeVa2000}; Iovane, Kapustyan and
Valero \cite{IO}. In the mentioned above papers and books it is assumed that the symbol $\sigma$ of equation (\ref{eq:8}) belongs to a Hausdorff topological
space $\Xi_+$ of functions defined on $\R_+$ with values in some complete metric space. Usually, in applications, the topology in the space $\Xi_+$ is a
local convergence topology on any segment $[t_1,t_2]\subset \R_+$. Further, they consider the family of equations (\ref{eq:8}) with various symbols
$\sigma(s)$ belonging to a set $\Sigma\subseteq\Xi_+$. The set $\Sigma$ is called the symbol space of the family of equations (\ref{eq:8}). It is assumed
that the set $\Sigma$, together with any symbol $\sigma(s)\in \Sigma$, contains all positive translations of $\sigma(s)$:
$\sigma(t+s)=T(t)\sigma(s)\in\Sigma$ for any $t,s\ge 0$. The symbol space $\Sigma$ is invariant with respect to the translation semigroup
$\{T(t)\}_{t\ge0}$: $T(t)\Sigma\subseteq\Sigma$ for any $t\ge 0$. To prove the existence of uniform trajectory attractors they suppose that the symbol space
$\Sigma$ with the topology induced from $\Xi_+$ is a compact metric space. Mostly in applications, as a symbol space $\Sigma$ it is naturally to consider
the hull of translation-compact function $\sigma_0(s)$ in an appropriate Hausdorff topological space $\Xi_+$. The direct realization of this approach to differential-operator inclusions, PDEs with Caratheodory's nonlinearities, optimization problems, etc, is problematic without any additional assumptions for parameters of Problem (\ref{eq:8}) and requires the translation-compactness of
the symbol $\sigma(s)$ in some compact Hausdorff topological space of measurable multivalued mappings acts from $\R_+$ to some metric space of
operators from $(V\to 2^{V^*}),$ where $V$ is a Banach space and $V^*$ is its dual space,  satisfying (possibly) only growth and sign assumptions. To avoid this technical difficulties we present an alternative
approach for the existence and construction of the uniform global attractor for classes of non-autonomous dynamical systems in special classes of infinite-dimensional phase spaces.

\section{Main Constructions and Results}

Let $p\ge 2$ and $q> 1$ be such that $\frac1p+\frac1q=1$, $(V;H;V^*)$ to be evolution triple
such that $V\subset H$ with compact embedding. For each $t_1,t_2\in \mathbb{R}$, $0\le
t_1<t_2<+\infty$, consider the space 
\[
W_{t_1,t_2}:=\{y(\cdot)\in L_{p}(t_1,t_2;V)\, :\, y'(\cdot)\in
L_{q}(t_1,t_2;{V^*})\},
\]
where $y'(\cdot)$ is a derivative of an element $y(\cdot)\in
L_{p}(t_1,t_2;V)$ in the sense of distributions
$\mathcal{D}^*([t_1,t_2];{V^*})$. The space $W_{t_1,t_2}$ endowed with
the norm
\[
\|y\|_{W_{t_1,t_2}}:=\|y\|_{L_{p}(t_1,t_2;V)}+\|y'\|_{L_{q}(t_1,t_2;{V^*})}, 
\quad y\in W_{t_1,t_2},
\]
is a reflexive Banach space. Note that $W_{t_1,t_2}\subset C([t_1,t_2];H)$ with continuous and dense embedding; Gajewsky et al \cite[Chapter~IV]{GGZ}.
For each $\tau\ge 0$, consider the Fr\'echet space 
\[
W^{\loc}([\tau,+\infty)):=\{y:[\tau,+\infty)\to H\,:\, \Pi_{t_1,t_2}y\in W_{t_1,t_2}\mbox{ for each }[t_1,t_2]\subset[\tau,+\infty)\},
\]
where $\Pi_{t_1,t_2}$ is the restriction operator to the finite time interval $[t_1,t_2]$. We recall that the sequence
$\{f_n\}_{n\ge 1}$ converges in $W^{\rm \rm loc}([\tau,+\infty))$ (in $C^{\rm \rm loc}([\tau,+\infty);H)$ respectively) to $f\in W^{\rm \rm loc}([\tau,+\infty))$ (to $f\in C^{\rm \rm loc}([\tau,+\infty);H)$ respectively) as $n\to+\infty $ if and only if the
sequence $\{\Pi_{t_1,t_2}f_n\}_{n\ge 1}$ converges in $W_{t_1,t_2}$ (in $C([t_1,t_2];H)$ respectively) to $\Pi_{t_1,t_2}f$ as $n\to+\infty$ for each finite time interval
$[t_1,t_2]\subset [\tau,+\infty)$. Further we denote that
\[
T(h)y(\cdot)=\Pi_{0,+\infty}\,y(\,\cdot\,+h), \quad y\in W^{\rm \rm loc}(\mathbb{R}_+), \ h \ge
0,
\]
where $\R_+=[0,+\infty)$ and $\Pi_{0,+\infty}$ is the restriction operator to the time interval $[0,+\infty)$.

Throughout the paper we consider the \textit{family of solution sets} $\{\K_\tau^+\}_{\tau\ge 0}$ such that $\K_{\tau}^+\subset W^{\loc}([\tau,+\infty))$ for each $\tau\ge 0$ and $\K_{\tau_0}^+\ne\emptyset$ for some $\tau_0\ge 0$. In the most of applications as $\K_\tau^+$ can be considered the family of globally defined on
$[\tau,+\infty)$ weak solutions for particular non-autonomous evolution problem (see Section~\ref{sec:app}).

 To state the main assumptions on the {family of solution sets} $\{\K_\tau^+\}_{\tau\ge 0}$ it is necessary to formulate two auxiliary definitions.

A function $\varphi\in L_\gamma^{\rm \rm loc}(\R_+)$, $\gamma>1$, is called  \textit{translation bounded} function in $L_\gamma^{\rm \rm loc}(\R_+)$ if
\[\sup_{t\ge0}\int_{t}^{t+1}|\varphi(s)|^\gamma ds<+\infty;\] Chepyzhov and Vishik \cite[p.~105]{VChRI}. A function $\varphi\in L_1^{\rm \rm
loc}(\R_+)$ is called \textit{a translation uniform integrable (t.u.i.)} function in $L_1^{\rm \rm loc}(\R_+)$ if
\[
\lim_{K\to+\infty}\sup_{t\ge0}\int_{t}^{t+1}|\varphi(s)|\I{\{|\varphi(s)|\ge K\}}ds=0;
\]
Gorban et al. \cite{GKK}.
Note that Dunford-Pettis compactness criterion provides that  $\varphi\in L_1^{\rm \rm loc}(\R_+)$ is a t.u.i. function in $L_1^{\rm \rm loc}(\R_+)$ if and
only if for every sequence of elements $\{\tau_n\}_{n\ge 1}\subset\R_+$, the sequence $\{\varphi(\,\cdot\,+\tau_n)\}_{n\ge 1}$ contains a subsequence
converging weakly in $L_1^{\rm \rm loc}(\R_+)$. Note that for each $\gamma>1$, every translation bounded in $L_{\gamma}^{\rm \rm loc}(\R_+)$ function is
t.u.i.  in $L_1^{\rm \rm loc}(\R_+)$; Gorban et al. \cite{GKK}.
\medskip

\noindent\textbf{Main assumptions.} Let the following two assumptions hold:
\begin{itemize}
\item[(A1)] there exist a t.u.i. in $L_1^{\rm \rm loc}(\R_+)$ function $c_1:\R_+\to\R_+$ and a constant $\alpha_1>0$ such that for each $\tau\ge 0$, $y\in \K_\tau^+$, and $t_2\ge t_1 \ge \tau$, the following inequality holds:
\begin{equation}\label{eq:1a}
\|y(t_2)\|_H^2-\|y(t_1)\|_H^2+\alpha_1\int_{t_1}^{t_2}\|y(t)\|_{V}^{p}dt\le \int_{t_1}^{t_2}c_1(t)dt;
\end{equation}
\item[(A2)] there exist a t.u.i. in $L_1^{\rm \rm loc}(\R_+)$ function $c_2:\R_+\to\R_+$ and a constant $\alpha_2>0$ such that
for each $\tau\ge 0$, $y\in \K_\tau^+$, and $t_2\ge t_1 \ge \tau$, the following inequality holds:
\begin{equation}\label{eq:1b}
\int_{t_1}^{t_2}\|y'(t)\|_{{V^*}}^{q}dt\le \alpha_2 \int_{t_1}^{t_2}\|y(t)\|_{V}^{p}dt+ \int_{t_1}^{t_2}c_2(t)dt.
\end{equation}
\end{itemize}

To characterize the uniform long-time behavior of solutions for non-autonomous dissipative dynamical system consider the \textit{united trajectory space}
$\mathcal{K}_\cup^+$ for the family of solutions $\{\K_\tau^+\}_{\tau\ge 0}$ shifted to zero:
\begin{equation}\label{eq:2}
\mathcal{K}_\cup^+:=\bigcup_{\tau\ge 0}\left\{T(h)y(\,\cdot + \tau\,)\,:\, y(\,\cdot\,)\in \mathcal{K}_{\tau}^+,\, h\ge 0\right\}\subset W^{\rm loc}(\R_+),
\end{equation}
and the \textit{extended united trajectory space} for the family $\{\K_\tau^+\}_{\tau\ge 0}$:
\begin{equation}\label{eq:3}
\quad \mathcal{K}^+:={\rm cl}_{C^{\rm \rm loc}(\mathbb{R}_+;H)}\left[\mathcal{K}_\cup^+\right],
\end{equation}
where ${\rm cl}_{C^{\rm \rm loc}(\mathbb{R}_+;H)}[\,\cdot\,]$ is the closure in $C^{\rm \rm loc}(\mathbb{R}_+;H)$.
Since
$T(h)\mathcal{K}_\cup^+\subseteq\mathcal{K}_\cup^+$  for each $h\ge0$, then
\begin{equation}\label{eq:4}
T(h)\mathcal{K}^+\subseteq\mathcal{K}^+\mbox{ for each }h\ge0,
\end{equation}
due to
\[
\rho_{C^{\rm \rm loc}(\mathbb{R}_+;H)}(T(h)u,T(h)v)\le \rho_{C^{\rm \rm loc}(\mathbb{R}_+;H)}(u,v)\mbox{ for each }u,v\in C^{\rm \rm loc}(\mathbb{R}_+;H),
\]
where $\rho_{C^{\rm \rm loc}(\mathbb{R}_+;H)}$ is the standard metric on Fr${\rm\acute{e}}$chet space $C^{\rm \rm loc}(\mathbb{R}_+;H)$. Therefore the set
\begin{equation}\label{eq:6}
\X:=\{y(0) \,:\, y\in \mathcal{K}^+ \}
\end{equation}
is closed in $H$ (it follows from Theorem~\ref{teor:2}). We endow this set $\X$ with metric
\[
\rho_\X(x_1,x_2)=\|x_1-x_2\|_{H},\quad x_1,x_2\in \X.
\]
Then we obtain that ($\X$,$\rho$) is a Polish
space (complete separable metric space).

Let us define the multivalued semiflow (\emph{m-semiflow}) $G:\R_+\times \X\to 2^{\X}$:
\begin{equation}\label{eq:5} G(t,y_0):=\{y(t)\,:\,
y(\cdot)\in \mathcal{K}^+\mbox{ and }y(0)=y_0 \}, \quad t\ge 0,\,y_0\in \X.
\end{equation}

According to  (\ref{eq:4}), (\ref{eq:6}), and (\ref{eq:5}) for each $t\ge 0$ and $y_0\in \X$ the set $G(t,y_0)$ is nonempty.
Moreover, the following two conditions hold:
\begin{itemize}
\item[(i)] $G\left( 0,\cdot \right) =I$ is the identity map; \item[(ii)] $G\left( t_1+t_2,y_0\right) \subseteq
G\left( t_1,G\left( t_2,y_0\right) \right) ,\,\,\forall t_1,t_2\in \R_+,$ $\forall y_0\in \X,$
\end{itemize}
 where $G\left( t,D\right) =\underset{y\in D}{\cup }G\left(
t,y\right) ,\,\,D\subseteq \X$.

 {We denote by} $\dist_\X(C,D)=\sup_{c\in C}\inf_{d\in
D}\rho(c,d)$ the \emph{Hausdorff semidistance} between nonempty subsets $C$ and $D$ of the Polish space $\X$. Recall that
the set $\Re \subset \X$ is a \emph{global attractor} of the m-semiflow $G$ if it satisfies the following conditions: \begin{itemize}
\item[(i)] $\Re $ attracts each bounded subset $B\subset \X$, i.e.
\begin{equation}\label{eq:22}
\dist_\X(G(t,B),\Re)\to 0,\quad t\to+\infty;
\end{equation}
\item[(ii)] $\Re $ is negatively semi-invariant set, i.e. $\Re \subseteq G\left( t,\Re
\right)$ for each $t\ge 0$;
\item[(iii)] $\Re $ is the minimal set among all nonempty closed subsets $C\subseteq \X$ that satisfy (\ref{eq:22}). \end{itemize}

In this paper we examine the uniform long-time behavior of solution sets $\{\K_\tau^+\}_{\tau\ge0}$ in the strong topology of the natural phase space $H$ (as time $t\to+\infty$)  in the sense of the existence
of a compact global attractor for m-semiflow $G$ generated by the family of solution sets $\{\K_\tau^+\}_{\tau\ge0}$ and their shifts.
The following theorem is the main result of the paper.
\begin{theorem}\label{t:main}
Let assumptions (A1)--(A2) hold. Then the m-semiflow $G$, defined in (\ref{eq:5}), has a compact global attractor $\Re$ in the phase space $\X$.
\end{theorem}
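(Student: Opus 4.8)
The plan is to verify the hypotheses of the classical theorem on global attractors for multivalued semiflows of Mel'nik and Valero \cite{MeVa2000}: once $G$ is shown to possess a bounded absorbing set, to be asymptotically compact, and to have closed‑graph maps $y_0\mapsto G(t,y_0)$ for every $t\ge 0$, that theorem produces a compact global attractor, namely $\Re=\omega(B_0):=\bigcap_{s\ge0}{\rm cl}_\X\big(\bigcup_{t\ge s}G(t,B_0)\big)$, enjoying properties (i)--(iii). The preparatory step is to extend the dissipativity estimates (A1)--(A2), postulated only on the sets $\K_\tau^+$, to the whole of $\K^+$. Being t.u.i., each $c_i$ satisfies $M_i:=\sup_{t\ge0}\int_t^{t+1}c_i<+\infty$; a shift $T(h)y(\cdot+\tau)$ of $y\in\K_\tau^+$ still obeys (A1)--(A2) with $c_i$ replaced by the translate $c_i(\cdot+\tau+h)$, whose running integrals are again bounded by $M_i$. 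Letting $z_n\in\K_\cup^+$ converge to $z$ in $C^{\loc}(\R_+;H)$: on each $[t_1,t_2]$ the values $z_n(t_1)$ are bounded in $H$, so (A1) bounds $\int_{t_1}^{t_2}\|z_n\|_V^p$ and then (A2) bounds $\int_{t_1}^{t_2}\|z_n'\|_{V^*}^q$; hence $\{z_n\}$ is bounded in $W_{t_1,t_2}$, a subsequence converges weakly there (to $z$), and by weak lower semicontinuity of the $V$‑norm together with the Dunford--Pettis criterion applied to the translates of $c_1$, one checks that every $z\in\K^+$ satisfies
\[
\|z(t_2)\|_H^2-\|z(t_1)\|_H^2+\alpha_1\int_{t_1}^{t_2}\|z(s)\|_V^p\,ds\le\int_{t_1}^{t_2}\bar c(s)\,ds,\qquad t_2\ge t_1\ge0,
\]
for some t.u.i. $\bar c\ge0$ (depending on $z$) with $\sup_t\int_t^{t+1}\bar c\le M_1$, and that $z\in W^{\loc}(\R_+)$ with $\|z\|_{W_{t_1,t_2}}$ bounded in terms of $\|z(t_1)\|_H$, $t_2-t_1$, $\alpha_1,\alpha_2,M_1,M_2$.

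For \emph{dissipativity} I would use the continuous embedding $V\subset H$: $\|v\|_H^2\le c_0\|v\|_V^2\le c_0(\|v\|_V^p+1)$ since $p\ge2$, so substituting $\|z\|_V^p\ge c_0^{-1}\|z\|_H^2-1$ into the previous inequality gives, with $\phi(t):=\|z(t)\|_H^2$ and $\beta:=\alpha_1/c_0>0$,
\[
\phi(t_2)-\phi(t_1)+\beta\int_{t_1}^{t_2}\phi(s)\,ds\le\int_{t_1}^{t_2}g(s)\,ds,\qquad g:=\alpha_1+\bar c,\ \ \sup_{t\ge0}\int_t^{t+1}g\le\alpha_1+M_1=:M,
\]
for all $z\in\K^+$ and $t_2\ge t_1\ge0$. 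A standard Gronwall‑type lemma (cf.\ \cite{VChRI,ZMK3}) then yields $\|z(t)\|_H^2\le\|z(0)\|_H^2 e^{-\beta t}+M(1-e^{-\beta})^{-1}$, so $B_0:=\{x\in\X:\|x\|_H^2\le 1+M(1-e^{-\beta})^{-1}\}$ is a nonempty bounded closed subset of $\X$ absorbing every bounded $B\subset\X$ (if $\|y_0\|_H\le r$ on $B$, then $G(t,B)\subseteq B_0$ for $t\ge\beta^{-1}\ln(1+r^2)$).

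The heart of the matter is the \emph{compactness claim}: if $z_n\in\K^+$ with $\sup_n\|z_n(0)\|_H<\infty$, then some subsequence converges weakly in $W^{\loc}(\R_+)$, strongly in $L_p^{\loc}(\R_+;H)$, and strongly in $C([\varepsilon,T];H)$ for all $0<\varepsilon<T$, to a limit $z\in W^{\loc}(\R_+)$; and if in addition $z_n(0)\to x$ in $H$, then $z_n\to z$ in $C^{\loc}(\R_+;H)$ with $z\in\K^+$, $z(0)=x$. By dissipativity $\sup_n\sup_{t\ge0}\|z_n(t)\|_H<\infty$, and the $W^{\loc}$‑bound makes $\{z_n\}$ bounded in $W_{0,T}$ for each $T$; a diagonal subsequence converges weakly in every $W_{0,T}$ to some $z\in W^{\loc}(\R_+)$, strongly in $L_p(0,T;H)$ by Aubin--Lions (here the compact embedding $V\subset H$ is used), hence $z_n(t)\to z(t)$ in $H$ for a.e.\ $t$, while $z_n(t)\rightharpoonup z(t)$ in $H$ for \emph{every} $t$, and $\{z_n\}$ is equicontinuous into $V^*$ since $\|z_n(t)-z_n(s)\|_{V^*}\le|t-s|^{1/p}\|z_n'\|_{L_q(0,T;V^*)}$. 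The extended (A1) makes $t\mapsto J_n(t):=\|z_n(t)\|_H^2-\int_0^t\bar c_n$ non‑increasing, and a further subsequence has $\bar c_n\rightharpoonup\bar c_\ast$ weakly in $L_1^{\loc}(\R_+)$ (Dunford--Pettis). If $C([\varepsilon,T];H)$‑convergence failed, one would find $\delta>0$ and $s_n\to s_0$ with $\|z_n(s_n)-z(s_0)\|_H\ge\delta$; the $V^*$‑equicontinuity and $z_n(s_0)\rightharpoonup z(s_0)$ give $z_n(s_n)\rightharpoonup z(s_0)$ in $H$, while picking $t_k\uparrow s_0$ at a.e.‑convergence points when $s_0>0$ (or $t_k\equiv0$, using $z_n(0)\to x$, when $s_0=0$), monotonicity of $J_n$ gives $\|z_n(s_n)\|_H^2\le\|z_n(t_k)\|_H^2+\int_{t_k}^{s_n}\bar c_n$, so (by the weak $L_1^{\loc}$‑convergence and uniform integrability of $\{\bar c_n\}$) $\limsup_n\|z_n(s_n)\|_H^2\le\|z(t_k)\|_H^2+\int_{t_k}^{s_0}\bar c_\ast\to\|z(s_0)\|_H^2$ as $k\to\infty$; together with weak lower semicontinuity this forces $z_n(s_n)\to z(s_0)$ in $H$, a contradiction. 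Hence the asserted convergences hold, and $z\in\K^+$ when $z_n(0)\to x$ because $\K^+$ is $C^{\loc}$‑closed. I expect this upgrade from weak convergence at each time to uniform strong convergence — driven by the monotone energy functional $J_n$ and by the Dunford--Pettis control of the varying forcing terms $\bar c_n$ — to be the principal obstacle.

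It then remains to harvest the three properties. \emph{$\X$ is closed in $H$:} given $\X\ni x_n\to x$, apply the claim to $y_n\in\K^+$ with $y_n(0)=x_n$ to produce $z\in\K^+$ with $z(0)=x$, so $x\in\X$. \emph{Closed graph with compact values:} if $x_n\to x$ and $G(t,x_n)\ni\xi_n\to\xi$ in $H$, take $y_n\in\K^+$ with $y_n(0)=x_n$, $y_n(t)=\xi_n$; the claim gives $y_n\to y\in\K^+$ in $C^{\loc}$, hence $y(0)=x$, $y(t)=\xi$, i.e.\ $\xi\in G(t,x)$ (with $x_n\equiv x$ this shows $G(t,x)$ is closed, hence compact, being bounded by dissipativity). \emph{Asymptotic compactness:} for bounded $B\subset\X$, $t_n\to+\infty$ and $\xi_n\in G(t_n,B)$, write $\xi_n=y_n(t_n)$ with $y_n\in\K^+$, $y_n(0)\in B$; the shifts $z_n:=T(t_n-1)y_n\in\K^+$ (by (\ref{eq:4})) satisfy $\sup_n\|z_n(0)\|_H<\infty$ (dissipativity) and $z_n(1)=\xi_n$, so the first part of the claim, at the interior instant $t=1$, yields a subsequence with $\xi_n\to z(1)$ in $H$; since $\X$ is $H$‑closed, $\{\xi_n\}$ is precompact in $\X$. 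With the bounded absorbing set $B_0$, the Mel'nik--Valero theorem \cite{MeVa2000} then furnishes the compact global attractor $\Re=\omega(B_0)$ of $G$ in $\X$, as asserted.
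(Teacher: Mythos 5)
Your proposal is correct and follows essentially the same route as the paper: a uniform dissipative estimate obtained from (A1) with the t.u.i.\ forcing via a Gronwall-type argument, a compactness statement for $\K^+$ proved through the nonincreasing energy functionals $J_n$, Dunford--Pettis control of the shifted/limiting forcing terms, and the contradiction argument upgrading weak pointwise convergence to strong convergence in $C([\varepsilon,T];H)$, followed by verification of the hypotheses of the abstract Mel'nik--Valero attractor theorem (the paper checks closed graph, compact values and the precompact attracting set $G(1,\tilde{C})$ while working with the dense subset $\mathcal{K}_\cup^+$, whereas you extend (A1) to all of $\K^+$ and check absorbing ball plus asymptotic compactness --- an organizational, not substantive, difference). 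One small wording slip: compactness of $G(t,y_0)$ does not follow from ``closed and bounded'' in the infinite-dimensional space $H$; it follows, exactly as in the paper, from your compactness claim itself, which gives strong $H$-convergence of $y_n(t)$ along a subsequence for $t>0$ (and trivially at $t=0$).
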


\section{Proof of Theorem~\ref{t:main}}

Before the proof of Theorem~\ref{t:main} we provide the following  statement characterizing the compactness properties of the family $\K^+$ in the topology induced from $C^{\rm \rm
loc}(\R_+;H)$.

\begin{theorem}\label{teor:2}
Let assumptions (A1)--(A2) hold. Then the following two statements hold:
\begin{itemize}
\item[(a)]  for each $y\in \mathcal{K}^+$, the following estimate holds
\begin{equation}\label{eq:77}
\|y(t)\|_H^2\le\|y(0)\|_H^2 e^{-c_3t}+c_4,\quad t\ge 0,
\end{equation}
where the positive constants $c_3$ and $c_4$ do not depend on $y\in \mathcal{K}^+$ and $t\ge0$;
\item[(b)] for any bounded in $L_\infty(\mathbb{R}_+;H)$ sequence $\{y_n\}_{n\ge 1}\subset \mathcal{K}^+$  ,  there
exist an increasing sequence $\{{n_k}\}_{k\ge 1}\subseteq\mathbb{N}$ and an element $y\in \mathcal{K}^+$ such that
\begin{equation}\label{eq:*}
\|\Pi_{\tau,T}y_{n_k}-\Pi_{\tau,T}y\|_{C([\tau,T];H)}\to0,\quad k\to+\infty,
\end{equation}
for each finite time interval $[\tau,T]\subset(0,+\infty)$. If, additionally, there exists $y_0\in H$ such that $y_{n_k}(0)\to y_0$ in $H$, then $y(0)=y_0$.
\end{itemize}
\end{theorem}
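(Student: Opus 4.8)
The plan is to establish (a) first for trajectories in $\mathcal{K}_\cup^+$ and then pass to the closure, and to prove (b) by an a priori estimate followed by a compactness argument whose crucial ingredient is the monotonicity of a suitable ``energy'' functional.

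\textbf{Part (a).} Fix $z\in\mathcal{K}_\cup^+$, say $z(\cdot)=T(h)y(\cdot+\tau)=y(\cdot+h+\tau)$ with $y\in\mathcal{K}_\tau^+$, and put $\phi(t)=\|z(t)\|_H^2$, which is continuous since $W^{\loc}(\mathbb{R}_+)\subset C^{\loc}(\mathbb{R}_+;H)$. After the change of variables, (A1) reads $\phi(t_2)-\phi(t_1)+\alpha_1\int_{t_1}^{t_2}\|z(s)\|_V^p\,ds\le\int_{t_1}^{t_2}c_1(s+h+\tau)\,ds$. Using the continuous embedding $\|\cdot\|_H\le\lambda\|\cdot\|_V$ and $a^2\le a^p+1$ (valid for $a\ge0$, $p\ge2$), one gets $\phi(t_2)-\phi(t_1)+\beta\int_{t_1}^{t_2}\phi\,ds\le\int_{t_1}^{t_2}g(s)\,ds$ with $\beta=\alpha_1\lambda^{-2}$ and $g(s)=c_1(s+h+\tau)+\alpha_1$, and the integral form of Gronwall's lemma yields $\phi(t)\le\phi(0)e^{-\beta t}+\int_0^te^{-\beta(t-s)}g(s)\,ds$. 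Since $c_1$ is t.u.i., $M:=\sup_{s\ge0}\int_s^{s+1}c_1<\infty$; splitting $[0,t]$ into unit intervals from the right bounds the last integral by $\alpha_1/\beta+M/(1-e^{-\beta})=:c_4$, uniformly in $z$ and $t$. This proves (\ref{eq:77}) on $\mathcal{K}_\cup^+$ with $c_3=\beta$. For general $y\in\mathcal{K}^+$ take $z_n\in\mathcal{K}_\cup^+$ with $z_n\to y$ in $C^{\loc}(\mathbb{R}_+;H)$, so $z_n(t)\to y(t)$ in $H$ for every $t$, and pass to the limit in (\ref{eq:77}).

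\textbf{Part (b): a priori estimates and weak/a.e.\ convergence.} By hypothesis $\sup_n\|y_n(0)\|_H\le R<\infty$. Choose $z_n\in\mathcal{K}_\cup^+$ with $\|z_n-y_n\|_{C([0,n];H)}<1/n$; then $\|z_n(0)\|_H\le R+1$ for $n$ large, and the shifted versions of (A1)--(A2) (valid on $\mathcal{K}_\cup^+$, with $c_1,c_2$ replaced by translates that are $L^1$-translation-bounded by $M$ uniformly in the shift) give, for each $T$, a bound $\|z_n\|_{W_{0,T}}\le C(R,T)$. By reflexivity of $W_{0,T}$ and a diagonal argument, extract a subsequence $z_{n_k}\rightharpoonup z$ weakly in $W_{0,T}$ for every $T$, with $z\in W^{\loc}(\mathbb{R}_+)$. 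Since $V\hookrightarrow H$ is compact, so is $H\hookrightarrow V^*$; from $\|z_{n_k}(t_2)-z_{n_k}(t_1)\|_{V^*}\le|t_2-t_1|^{1/p}\|z_{n_k}'\|_{L^q(0,T;V^*)}$ the family $\{z_{n_k}\}$ is equicontinuous in $C([0,T];V^*)$, so by Arzel\`a--Ascoli (and a further diagonal extraction) $z_{n_k}\to z$ in $C([0,T];V^*)$; the Aubin--Lions lemma gives $z_{n_k}\to z$ in $L^p(0,T;H)$, hence $z_{n_k}(t)\to z(t)$ in $H$ for a.e.\ $t$; and, evaluation being weakly continuous on $W_{0,T}$, $z_{n_k}(t)\rightharpoonup z(t)$ in $H$ for every $t$.

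\textbf{Part (b): the crucial compactness step and conclusion.} The heart of the matter is to upgrade this to $z_{n_k}\to z$ in $C([\tau,T];H)$ for every $[\tau,T]\subset(0,+\infty)$. Since $c_1$ is t.u.i., pass to a further subsequence so that $c_1(\cdot+h_{n_k}+\tau_{n_k})\rightharpoonup\bar c$ weakly in $L^1(0,T)$ for each $T$; then $\Psi_{n_k}(t):=\int_0^t c_1(s+h_{n_k}+\tau_{n_k})\,ds$ converges to $\Psi(t):=\int_0^t\bar c$ uniformly on compacts. By (A1) in shifted form, each map $t\mapsto\|z_{n_k}(t)\|_H^2-\Psi_{n_k}(t)$ is non-increasing, so for $0<s<t$ with $s$ in the full-measure set of strong $H$-convergence, $\|z_{n_k}(t)\|_H^2\le\|z_{n_k}(s)\|_H^2+\Psi_{n_k}(t)-\Psi_{n_k}(s)$; taking $\limsup_k$ and then $s\uparrow t$, and using $z\in C([0,T];H)$ and continuity of $\Psi$, gives $\limsup_k\|z_{n_k}(t)\|_H^2\le\|z(t)\|_H^2$, which with the weak convergence yields $z_{n_k}(t)\to z(t)$ in $H$ for every $t\in(0,T]$. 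Uniformity on $[\tau,T]$ follows by contradiction: if $\|z_{n_k}(t_k)-z(t_k)\|_H\ge\varepsilon$ with $t_k\to t_*\in[\tau,T]$, then the same monotonicity estimate with a fixed good $s<t_*$, together with $z_{n_k}\to z$ in $C([0,T];V^*)$, forces $z_{n_k}(t_k)\to z(t_*)$ in $H$, contradicting the lower bound via the triangle inequality and continuity of $z$. To see $z\in\mathcal{K}^+$: for each $\delta>0$, $T(\delta)z_{n_k}\in\mathcal{K}_\cup^+$ and $T(\delta)z_{n_k}\to T(\delta)z$ in $C^{\loc}(\mathbb{R}_+;H)$ by the above, hence $T(\delta)z\in\mathcal{K}^+$; letting $\delta\to0^+$ and using $z\in W^{\loc}(\mathbb{R}_+)\subset C^{\loc}(\mathbb{R}_+;H)$ (so $T(\delta)z\to z$) gives $z\in\mathcal{K}^+$. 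Finally, with $y:=z$, $\|y_{n_k}-y\|_{C([\tau,T];H)}\le\|y_{n_k}-z_{n_k}\|_{C([0,n_k];H)}+\|z_{n_k}-z\|_{C([\tau,T];H)}\to0$ proves (\ref{eq:*}); and if $y_{n_k}(0)\to y_0$ in $H$, then $z_{n_k}(0)\to y_0$ as well while $z_{n_k}(0)\rightharpoonup z(0)$, so $y(0)=z(0)=y_0$. I expect the monotone-energy step (passing from weak/a.e.\ convergence to everywhere-strong and then uniform convergence) to be the main obstacle; the a priori $W_{0,T}$ estimates and the closure-by-shifts argument are routine.
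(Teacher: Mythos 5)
Your proof is correct and follows essentially the same route as the paper: for (a), reduction to $\mathcal{K}_\cup^+$ via shifted (A1), a Gronwall-type estimate, and the t.u.i./translation-boundedness bound on the forcing integral; for (b), approximation by $\mathcal{K}_\cup^+$, a priori $W$-bounds, weak compactness plus a.e.\ strong $H$-convergence, weak $L_1^{\rm loc}$ convergence of the shifted $c_1$, and the monotone energy functional with a contradiction argument to get uniform convergence on compact subsets of $(0,+\infty)$. The only notable additions are cosmetic: you spell out the Arzel\`a--Ascoli/Aubin--Lions step that the paper delegates to a citation, and you give an explicit shift-by-$\delta$ argument for $y\in\mathcal{K}^+$, a point the paper leaves implicit.
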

\begin{proof}[Proof of Theorem~\ref{teor:2}]
Let us prove statement (a). If statement (a) holds for each $y\in \mathcal{K}_\cup^+$, then
  inequality (\ref{eq:77}) holds for each $y\in \mathcal{K}^+$, due to (\ref{eq:3}). The rest of the proof of statement (a) establishes inequality (\ref{eq:77}) for each $y\in \mathcal{K}_\cup^+$.

For an arbitrary $y\in \mathcal{K}_\cup^+$, there exist $\tau,h\ge 0$ and $z(\,\cdot\,)\in \mathcal{K}_{\tau}^{+}$ such that  $y(\,\cdot\,)=T(\tau+h)z(\,\cdot\,)$. Assumption (A1) implies the following inequality:
  \begin{equation}\label{eq:t2:1}
  \|y(t_2)\|_H^2-\|y(t_1)\|_H^2+\alpha_1\int_{t_1}^{t_2}\|y(t)\|_{V}^{p}dt\le \int_{t_1}^{t_2}c_1(t+\tau+h)dt,
  \end{equation}
  for each $t_2\ge t_1\ge 0,$ where $c_1(\cdot)$ is t.u.i. in $L_1^{\rm \rm loc}(\R_+)$. Since the embedding $V\subset H$ is compact, then this embedding is continuous. So, there exists a constant $\beta>0$ such that
   $\|b\|_{H}\le \beta \|b\|_{V}$ for each $b\in V.$
   According to (\ref{eq:t2:1}), since the inequality $a^2\le 1+a^{p}$ holds for each $a\ge 0$, then the following inequality holds:
  \begin{equation}\label{eq:t2:2}
  \|y(t_2)\|_H^2-\|y(t_1)\|_H^2+\alpha_3\int_{t_1}^{t_2}\|y(t)\|_{H}^{2}dt\le \int_{t_1}^{t_2}\left[c_1(t+\tau+h)+\alpha_3\right]dt,
  \end{equation}
for each $t_2\ge t_1\ge 0,$ where $\alpha_3=\frac{\alpha_1}{\beta^{p}} $.  Let us set
\[
\rho(t):= \|y(t)\|_H^2+\alpha_3\int_{0}^{t}\|y(s)\|_{H}^{2}ds-\int_{0}^{t}\left[c_1(s+\tau+h)+\alpha_3\right]ds,\quad t\ge 0.
\]
Inequality (\ref{eq:t2:2}) and Ball \cite[Lemma~7.1]{Ball97} yield that $\frac{d}{dt}\rho\le 0$ in $D^*((0,+\infty)),$ where $\frac{d}{dt}$ is the derivative operation in the sense of $D^*((0,+\infty))$. Thus,
\[
\frac{d}{dt}\|y(t)\|_H^2 +\alpha_3\|y(t)\|_{H}^{2}-\left[c_1(t+\tau+h)+\alpha_3\right]\le 0\mbox{ in }D^*((0,+\infty)).
\]
Therefore,
\begin{equation}\label{eq:t2:3}
\frac{d}{dt}\left[\|y(t)\|_H^2e^{\alpha_3t}\right] -e^{\alpha_3t}\left[c_1(t+\tau+h)+\alpha_3\right]\le 0\mbox{ in }D^*((0,+\infty)).
\end{equation}
Ball \cite[Lemma~7.1]{Ball97} and inequality (\ref{eq:t2:3}) imply
\begin{equation}\label{eq:t2:4}
\|y(t_2)\|_H^2\le \|y(t_1)\|_H^2e^{-\alpha_3(t_2-t_1)} +\int_{t_1}^{t_2}e^{-\alpha_3(t_2-t)}\left[c_1(t+\tau+h)+\alpha_3\right]dt,
\end{equation}
for each $t_2\ge t_1\ge 0.$ Therefore,
\[
\begin{aligned}
\|y(t_2)\|_H^2\le &\|y(t_1)\|_H^2e^{-\alpha_3(t_2-t_1)} +\int_{t_1}^{t_2}e^{-\alpha_3(t_2-t)}\left[c_1(t+\tau+h)+\alpha_3\right]dt\le \\
&\|y(t_1)\|_H^2e^{-\alpha_3(t_2-t_1)} +1+\int_{t_1+\tau+h}^{t_2+\tau+h}e^{-\alpha_3(t_2-t+\tau+h)}c_1(t)dt\le \\
& \|y(t_1)\|_H^2e^{-\alpha_3(t_2-t_1)} +1+\frac{K}{\alpha_3} {+}\\
& \int_{t_1+\tau+h}^{t_2+\tau+h}e^{-\alpha_3(t_2-t+\tau+h)}|c_1(t)|\I{\{|c_1(t)|\ge K\}}dt,
\end{aligned}
\]
for each $K>0$, $t_2\ge t_1\ge 0.$
Since the function $c_1:\R_+\to\R_+$  is t.u.i. in $L_1^{\rm \rm loc}(\R_+)$ (see assumption (A1)), then there exists $K_0>0$ such that
\[
\sup_{t\ge0}\int_{t}^{t+1}|c_1(s)|\I{\{|c_1(s)|\ge K_0\}}ds\le 1.
\]
Thus,
\[\begin{aligned}
\|y(t_2)\|_H^2\le &\|y(t_1)\|_H^2e^{-\alpha_3(t_2-t_1)} +1+\frac{K_0}{\alpha_3}+{e^{\alpha_3}+1},
\end{aligned}\]
that yields estimate (\ref{eq:77}) with $c_3:=\alpha_3$ and $c_4:=1+\frac{K_0}{\alpha_3}+e^{\alpha_3}+1$, where the positive constants $c_3$ and $c_4$ do not depend on $y\in \mathcal{K}^+$ and $t\ge0$.

Let us prove statement (b). Let $\{y_n\}_{n\ge 1}\subset \mathcal{K}^+$ be an arbitrary sequence that is bounded in
$L_\infty(\mathbb{R}_+;H)$. Since $\mathcal{K}_\cup^+$ is the dense set in a Polish space
$\mathcal{K}^+$ endowed with the topology induced from $C^{\rm \rm
loc}(\R_+;H)$, then for each $n\ge 1$ there exists $u_n\in \mathcal{K}_\cup^+$ such that
\begin{equation}\label{eq:un}
\rho_{C^{\rm \rm
loc}(\R_+;H)}(y_n,u_n)\le \frac1n, \mbox{ for each }n\ge 1.
\end{equation}
Note that a priori estimate (\ref{eq:77}) provides that the sequence $\{u_n\}_{n\ge 1}$ is bounded in
$L_\infty(\mathbb{R}_+;H)$. Therefore, the rest of the proof establishes statement (b) for the sequence $\{u_n\}_{n\ge 1}$.

Let us fix $n\ge 1$. Formula (\ref{eq:2}) provides the existence of
$\tau_n,h_n\ge0$ and  $z_n(\,\cdot\,)\in \mathcal{K}_{\tau_n}^+$ such that $u_n(\,\cdot\,)=z_n(\,\cdot\,+\tau_n+h_n)$. Then,
assumptions (A1) and (A2) yield
\begin{equation}\label{eq:stb1}
\begin{aligned}
\|{u_n}(t_2)\|_H^2-\|{u_n}(t_1)\|_H^2+\alpha_1\int_{t_1}^{t_2}\|{u_n}(t)\|_{V}^{p}dt\le \int_{t_1}^{t_2}c_1(t+\tau_n+h_n)dt,
\\
\int_{t_1}^{t_2}\|{u_n}'(t)\|_{{V^*}}^{q}dt\le \alpha_2 \int_{t_1}^{t_2}\|{u_n}(t)\|_{V}^{p}dt+ \int_{t_1}^{t_2}c_2(t+\tau_n+h_n)dt,
\end{aligned}
\end{equation}
for each $t_2\ge t_1\ge 0$ and $n\ge 1$.

We remark that
\begin{equation}\label{eq:stb2}
\sup_{n\ge 1}\int_{t_1}^{t_2}|c_1(t+\tau_n+h_n)|dt<\infty\mbox{ and }\sup_{n\ge 1}\int_{t_1}^{t_2}|c_2(t+\tau_n+h_n)|dt<\infty,
\end{equation}
for each $t_2\ge t_1\ge 0$, since the functions $c_1,c_2:\R_+\to\R_+$ are t.u.i. in $L_1^{\rm \rm loc}(\R_+)$.

Formulae (\ref{eq:stb1}) and (\ref{eq:stb2}) imply that the sequence $\{u_n\}_{n\ge 1}$ is bounded in $W^{\loc}(\R_+)$. Thus, Banach--Alaoglu theorem and Zgurovsky et al. \cite[Theorems~1.16 and 1.21]{ZMK2}
yield that there exist an increasing sequence $\{{n_k}\}_{k\ge 1}\subseteq\mathbb{N}$ and elements $y\in W^{loc}(\R_+)\subset C^{\rm \rm loc}(\R_+;H)$ and $\bar{c}_1
\in L_1^{\rm \rm loc}(\R_+)$ such that
\begin{equation}\label{eq:88}
\begin{array}{ll}
u_{n_k}\to y &\mbox{weakly in  }L_{p}^{\rm \rm loc}(\R_+;V),\\
u_{n_k}'\to y' &\mbox{weakly in  }L_{q}^{\rm \rm loc}(\R_+;{V^*}),\\
u_{n_k}\to y &\mbox{weakly  in  }C^{\rm \rm loc}(\R_+;H),\\
u_{n_k}(t)\to y(t) &\mbox{in  }H\mbox{ for a.e. }t>0,\\
c_{1}(\,\cdot\,+\tau_{n_k}+h_{n_k})\to\bar{c}_1&\mbox{weakly in  }L_1^{\rm \rm loc}(\R_+),\quad k\to\infty,
\end{array}
\end{equation}
where the last convergence holds due to the fact that $c_1
\in L_1^{\rm \rm loc}(\R_+)$  is t.u.i. in $L_1^{\rm \rm loc}(\R_+)$. According to (\ref{eq:88}), we can pass to the limit in (\ref{eq:1a}). So, we obtain that $y$ satisfies (\ref{eq:1a}).

We consider the continuous and nonincreasing (by assumption (A1))
functions on $\R_+$:
\begin{equation}\label{eq:8.31}
\begin{aligned}
&J_k(t)=\|u_{n_k}(t)\|_H^2 - \int_0^t c_{1}(s+\tau_{n_k}+h_{n_k}) ds,\\
&J(t)=\|y(t)\|_H^2 -\int_0^t \bar{c}_1(s) ds,\quad k\ge 1;
\end{aligned}
\end{equation}
cf. Kapustyan and Valero et al. \cite{KapVal09}. The  last two statements in (\ref{eq:88}) imply
\begin{equation}\label{chapter2eq:8.4}
J_k(t)\to J(t),\mbox{ as }k\to +\infty,\mbox{ for a.e. }t>0.
\end{equation}

Similarly to Zgurovsky et al. \cite[p.~57]{ZMK3} (see the book and references therein) we show that
(\ref{eq:*}) holds. By contradiction suppose the existence of
a positive constant $L>0$, a finite interval $[\tau,T]\subset
(0,+\infty)$, and a subsequence $\{u_{k_j}\}_{j\ge
1}\subseteq\{u_{n_k}\}_{k\ge1}$ such that
\[
 \max_{t\in[\tau,T]}\|u_{k_j}(t)-y(t)\|_H= \|u_{k_j}(t_j)-y(t_j)\|_H\ge L,
\]
for each $j\ge 1.$
Suppose also that $t_j\to t_0\in [\tau,T]$, as $j\to+\infty$. Continuity of $\Pi_{\tau,T}y:[\tau,T]\to H$ implies
\begin{equation}\label{eq:**}
\isup_{j\to+\infty}\|u_{k_j}(t_j)-y(t_0)\|_H\ge L.
\end{equation}

On the other hand, we prove that
\begin{equation}\label{eq:19}
u_{k_j}(t_j)\to y(t_0) \mbox{ in } H,\,\, j\to +\infty.
\end{equation}

For this purpose we firstly note that from (\ref{eq:88}) we have
\begin{equation}\label{eq:20}
u_{k_j}(t_j)\to y(t_0)\mbox{ weakly in } H,\quad j\to+\infty.
\end{equation}

Secondly we prove that
\begin{equation}\label{eq:21}
\vsup_{j\to+\infty}\|u_{k_j}(t_j)\|_H\le\|y(t_0)\|_H.
\end{equation}
We consider the continuous nonincreasing functions $J$ and $J_{k_j}$, $j\ge1$, defined in (\ref{eq:8.31}). Let us fix an arbitrary $\varepsilon
 >0$. The continuity of $J$ and (\ref{chapter2eq:8.4}) provide the existence of $\bar{t}\in(\tau,t_0)$ such that
$\lim_{j\to\infty}J_{k_j}(\bar{t})=J(\bar{t})$  and  $|J(\bar{t})-J(t_0)|<\varepsilon$. Then,
\[
J_{k_j}(t_j)-J(t_0)\le |J_{k_j}(\bar{t})-J(\bar{t})|+|J(\bar{t})-J(t_0)|\le |J_{k_j}(\bar{t})-J(\bar{t})|+\varepsilon,
\]
for rather large $j\ge 1$. Thus, $\vsup_{j\to+\infty}J_{k_j}(t_j)\le
 J(t_0)$ and inequality (\ref{eq:21}) holds.

Thirdly note that the convergence (\ref{eq:19}) holds due to
(\ref{eq:20}), (\ref{eq:21}); cf. Gajewski et al.~\cite[Chapter~I]{GGZ}.  Finally, we remark that statement
(\ref{eq:19}) contradicts assumption (\ref{eq:**}). Therefore, according to (\ref{eq:un}),
the first statement of the theorem holds for each sequence
$\{y_n\}_{n\ge 1}\subset \mathcal{K}^+$.

To finish the proof of  statement (b) we note that if, additionally, there exists $y_0\in H$ such that $y_{n_k}(0)\to y_0$ in $H$, then, according to the third convergence in (\ref{eq:88}), $y(0)=y_0$.
\end{proof}

\medskip

Let us provide the proof of the main result.

\medskip

\begin{proof}[Proof of Theorem~\ref{t:main}]
Theorem ~\ref{teor:2} implies the following properties for the  {m-semi\-flow~$G$}, defined in (\ref{eq:5}):
\begin{itemize}
\item[(a)] for each $t\ge 0$ the mapping $G(t,\,\cdot\,):\X\to 2^{\X}\setminus\{\emptyset\}$ has a closed graph;
\item[(b)] for each $t\ge0$ and $y_0\in \X$ the set $G(t,y_0)$ is compact in $\X$;
\item[(c)] the set $G(1,\tilde{C})$, where $\tilde{C}:=\{z\in \X\,:\, \|z\|_H^2< c_4+1 \}$, is precompact and attracts each bounded subset $C\subset \X$.
\end{itemize}
Indeed, property (a) follows from Theorem~\ref{teor:2} (see formulae (\ref{eq:3}) and (\ref{eq:5}));
 property (b) directly follows from (a) and Theorem~\ref{teor:2}(b); property (c) holds, since $G(1,\tilde{C})$ is precompact in $\X$ (Theorem~\ref{teor:2}(b) and formula~(\ref{eq:5})) and the following inequalities and  {equality hold}:
\begin{equation*}
\dist_\X(G(t,C),G(1,\tilde{C}))\le \dist_\X(G(1,G(t-1,C)),G(1,\tilde{C}))\le
\end{equation*}
\begin{equation*}
\dist_\X(G(1,\tilde{C}),G(1,\tilde{C}))=0,
\end{equation*}
for sufficiently large $t$.

According to properties (a)--(c), Mel'nik and Valero \cite[Theorems~1, 2, Remark~2, Proposition~1]{MeVa1997} yields that
 the m-semiflow $G$ has a compact global attractor $\Re$ in the phase space $\X$.
\end{proof}

\section{Applications}\label{sec:app}
In the following three examples we examine the uniform global attractor for the family of solution sets $\{\mathcal{K}_{\tau}^{+}\}$ generated by particular evolution problems. In all the cases we assume that
\[
\forall z\in H \,\ \forall \tau\ge 0 \,\ \exists y\in \mathcal{K}_{\tau}^{+} \mbox{ such that } y(\tau)=z.
\]
This assumption guarantees the equality $\X=H$.

\textbf{Example 4.1}
{\rm (Autonomous evolution problem) Let $\{\mathcal{K}_{\tau}^{+}\}$ be a family of solutions for an autonomous problem on $[\tau, +\infty)$, $\tau\ge 0$. Then we have:
\begin{equation}\label{eq49}
\forall h\ge 0 \,\,\ T(h)\mathcal{K}_{0}^{+} \subset \mathcal{K}_{0}^{+};
\end{equation}
\begin{equation}\label{eq50}
\forall \tau\ge 0 \,\ \forall y\in \mathcal{K}_{\tau}^{+} \,\ y(\cdot +\tau) \in \mathcal{K}_{0}^{+}.
\end{equation}
So, $\mathcal{K}_\cup^+=\mathcal{K}_{0}^{+}$. If additionally we have that
\begin{equation}\label{eq51}
\mathcal{K}_{0}^{+} \mbox{ is closed in } C^{\rm \rm loc}(\mathbb{R}_+;H),
\end{equation}
then
$$
\mathcal{K}^{+}=\mathcal{K}_{0}^{+}.
$$
It implies that the m-semiflow $G$ (defined by (\ref{eq:5})) is a classical multivalued semigroup generated by an autonomous evolution problem.
}

\textbf{Example 4.2}
{\rm  {(}Non-autonomous evolution problem) Let $\{\mathcal{K}_{\tau}^{+}\}$ be a family of solutions for non-autonomous problem on $[\tau, +\infty)$, $\tau\ge 0$, and the following condition holds:
\begin{equation}\label{eq52}
\forall s\ge\tau\ge 0 \,\ \forall y\in \mathcal{K}_{\tau}^{+} \,\,\,\ \Pi_{s,+\infty} y(\cdot)\in \mathcal{K}_{s}^{+}.
\end{equation}
Then, according to Kapustyan et al. \cite{Kaskapvalz}, formula
\begin{equation}\label{eq53}
U(t,\tau,z)=\{y(t) \,: \, y(\cdot)\in \mathcal{K}_{\tau}^{+}, \,\ y(\tau)=z\}
\end{equation}
defines a m-semiprocess, that is
\[
\forall t\ge s \ge \tau \,\,\,\  U(t,\tau,z) \subset U(t,s,U(s,\tau,z)).
\]
One of the most important objects for m-semiprocess (\ref{eq53}) is uniform global attractor;  Chepyzhov and Vishik\cite{VChRI}, Kapustyan et al. \cite{KKV}, Zgurovsky et al. \cite{ZMK3}. It is a set $\Theta$ such that for every bounded subset $C\subset H$
\begin{equation}\label{eq54}
\sup_{\tau\ge0}\dist_H (U(t+\tau,\tau,C),\Theta) \to 0, \,\,\ t\to\infty,
\end{equation}
and $\Theta$ is minimal among all closed sets satisfying this property. Then under  {assumptions} (A1), (A2) and  from (\ref{eq52}) it follows that the m-semiprocess (\ref{eq53}) has the compact uniform global attractor $\Theta \subseteq \Re$, where
$\Re$ is the global attractor for the m-semiflow (\ref{eq:5}).

Indeed,
\begin{equation}\label{eq55}
\forall t\ge\tau\ge 0 \,\ \forall z\in H \,\,\,\,\ U(t+\tau,\tau,z) \subset G(t,z).
\end{equation}
So, if $\Re$ is a compact global attractor for the m-semiflow $G$ then, according to Kapustyan et al. \cite{KKV},  there exists a compact uniform  global attractor $\Theta$
for m-semiprocess $U$ and, moreover, $\Theta\subset \Re$.}

In the following example we examine the existence of uniform global attractor for non-autonomous differential-operator inclusion. The uniform trajectory attractors for classes of non-autonomous inclusions and equations were proved to exist in Zgurovsky and Kasyanov \cite{dcds9} (see also Gorban et al. \cite{GKK}).

\textbf{Example 4.3}
{\rm (Non-autonomous differential-operator inclusion) For the multivalued map
$A:\R_+\times V\rightarrow 2^{V^*\setminus\{\emptyset\}}$ we consider the problem of long-time behavior of all globally defined weak solutions for non-autonomous evolution
inclusion
\begin{equation}\label{eq:1}
y'(t)+A(t,y(t))\ni \bar{0},
\end{equation}
as $t\to+\infty$. Let $\langle\cdot,\cdot\rangle_V:V^*\times V\to\mathbb{R}$ be the pairing in $V^*\times V$, that coincides on $H\times V$ with the inner
product $(\cdot,\cdot)$ in the Hilbert space $H$.

We note that Problem (\ref{eq:1}) arises in many important models for distributed parameter control problems and that large class of identification
problems enter this formulation. Let us indicate a problem which is one of the motivations for the study of the non-autonomous evolution inclusion (\ref{eq:1})
(see, for example, Mig${\rm\acute{o}}$rski and Ochal \cite{JGO2000}; Zgurovsky et al. \cite{ZMK3} and references therein). In a subset $\Omega$ of
$\mathbb{R}^3,$ we consider the nonstationary heat conduction equation
$$
\frac{\partial y}{\partial t}-\triangle y=f\,\,\mbox{in}\,\,\Omega\times (0,+\infty)
$$
with initial conditions and suitable boundary ones. Here $y=y(x,t)$ represents the temperature at the point $x\in \Omega$ and time $t>0.$ It is supposed
that $f={f_1}+f_2,$ where ${f_2}$ is given and ${f_1}$ is a known function of the temperature of the form
$$
-f_1(x,t)\in \partial j(x,t,y(x,t))\,\,\mbox{a.e.}\,\,(x,t)\in \Omega\times (0,+\infty).
$$
Here $\partial j(x,t,\xi)$ denotes generalized gradient of Clarke (see Clarke \cite{chapter2cl}) with respect to the last variable of a function
$j:\Omega\times \mathbb{R_+}\times \mathbb{R}\to \mathbb{R}$ which is assumed to be locally Lipschitz in $\xi$ (cf. Mig${\rm\acute{o}}$rski and Ochal \cite{JGO2000} and
references therein). The multivalued function $\partial j(x,t,\cdot):\mathbb{R}\to 2^{\mathbb{R}}$ is generally nonmonotone and it includes the vertical
jumps. In a physicist's language it means that the law is characterized by the generalized gradient of a nonsmooth potential $j$ (cf. Panagiotopoulos
\cite{Pana}). Models of physical interest includes also the next (see, for  {example}, Balibrea et al. \cite{introPersp} and references therein): a model of
combustion in porous media; a model of conduction of electrical impulses in nerve axons; a climate energy balance model; etc.

Let the following  {assumptions} hold:
\begin{itemize}
\item[(H1)](\emph{Growth condition}) There exist a  t.u.i. in $L_1^{\rm loc}(\R_+)$ function $c_1:\R_+\to\R_+$ and a constant $c_2>0$  such that
$\|d\|_{V^*}^{q}\le c_1(t)+c_2\|u\|_V^{p}$ for any $u\in V$, $d\in A(t,u)$, and a.e. $t> 0$;
\item[(H2)](\emph{Sign condition}) There exist a constant $\alpha>0$ and a  t.u.i. in $L_1^{\rm loc}(\R_+)$ function $\beta:\R_+\to\R_+$ such that
$\langle d,u\rangle_V\ge \alpha\|u\|_V^p-\beta(t)$ for any $u\in V$, $d\in A(t,u)$, and a.e. $t> 0$;
\item[(H3)](\emph{Strong measurability}) If $C\subseteq V^*$ is a closed set, then the set $\{(t,u)\in(0,+\infty)\times V\,:\, A(t,u)\cap C\ne\emptyset\}$
is a Borel subset in $(0,+\infty)\times V$;
\item[(H4)](\emph{Pointwise pseudomonotonicity}) Let for a.e. $t> 0$ the following two assumptions hold: \begin{itemize} \item[]a) for every $u\in V$ the set $A(t,u)$
is nonempty, convex, and weakly compact one in $V^*$; \item[]b) if a sequence $\{u_n\}_{n\ge 1}$ converges weakly in $V$ towards $u\in V$ as $n\to+\infty$,
$d_n\in A(t,u_n)$ for any $n\ge 1$, and $\slim_{n\to+\infty}\langle d_n,u_n-u\rangle_V\le 0$, then for any $\omega \in V$ there exists $d(\omega)\in
A(t,u)$ such that \[\ilim_{n\to+\infty} \langle d_n, u_n-\omega\rangle_V\ge \langle d(\omega), u-\omega\rangle_V.\]
\end{itemize}
\end{itemize}

Let $0\le \tau<T<+\infty$. As a \textit{weak solution} of evolution inclusion (\ref{eq:1}) on the interval $[\tau,T]$ we consider an element $u(\cdot)$ of
the space $L_p(\tau,T;V)$ such that for some $d(\cdot)\in L_q(\tau,T;V^*)$ it is fulfilled:
\begin{equation}\label{Warningeq:3}
-\int_\tau^T(\xi'(t),y(t))dt+ \int_\tau^T\langle d(t),\xi(t)\rangle_V dt= 0\quad \forall \xi\in C_0^\infty([\tau,T];V),
\end{equation}
and $d(t)\in A(t,y(t))$ for a.e. $t\in (\tau,T)$.
For fixed nonnegative $\tau$ and $T$, $\tau<T$, let us consider
\begin{equation*}
X_{\tau,T}=L_p(\tau,T;V),\quad X_{\tau,T}^*=L_q(\tau,T;V^*),
\end{equation*}
\begin{equation*}
W_{\tau,T}=\{y\in X_{\tau,T}\ |\ y'\in X_{\tau,T}^*\},\quad
\mathcal{A}_{\tau,T}:X_{\tau,T}\rightarrow 2^{X_{\tau,T}^*}\setminus\{\emptyset\},
\end{equation*}
\begin{equation*}
\mathcal{A}_{\tau,T}(y)=\{d\in X_{\tau,T}^*\,|\, d(t)\in A(t,y(t))\mbox{ for a.e. }t\in(\tau,T)\},
\end{equation*}
where $y'$ is a derivative of an element $u\in X_{\tau,T}$ in the sense of $\mathcal{D}([\tau,T];V^*)$ (see, for example, Gajewski, Gr\"{o}ger, and
Zacharias \cite[Definition~IV.1.10]{GGZ}). Gajewski, Gr\"{o}ger, and Zacharias \cite[Theorem~IV.1.17]{GGZ} provide that the embedding $W_{\tau,T}\subset
C([\tau,T];H)$ is continuous and dense. Moreover,
\begin{equation}\label{Warningeq:4}
(u(T),v(T))-(u(\tau),v(\tau))=\int_\tau^T\Bigl[ \langle u'(t),v(t)\rangle_V+\langle v'(t),u(t)\rangle_V\Bigr]dt,
\end{equation}
for any $u,v\in W_{\tau,T}$.

Mig${\rm\acute{o}}$rski \cite[Lemma~7, p.~516]{Mig} (see the paper and references therein) and the assumptions above provide that the multivalued mapping
$\mathcal{A}_{\tau,T}:X_{\tau,T}\rightarrow 2^{X_{\tau,T}^*}\setminus\{\emptyset\}$ satisfies the listed below  {properties}:
\begin{itemize}
\item[(P1)]There exists a positive constant $C_1=C_1(\tau,T)$ such that $\|d\|_{X_{\tau,T}^*}\le C_1(1+\|y\|_{X_{\tau,T}}^{p-1})$ for any $y\in X_{\tau,T}$ and $d\in \mathcal{A}_{\tau,T}(y)$;
\item[(P2)]There exist positive constants $C_2=C_2(\tau,T)$ and $C_3=C_3(\tau,T)$ such that $\langle
d,y\rangle_{X_{\tau,T}}\ge C_2\|y\|_{X_{\tau,T}}^p-C_3$ for any $y\in X_{\tau,T}$ and  $d\in \mathcal{A}_{\tau,T}(y)$;
\item[(P3)]The
multivalued mapping $\mathcal{A}_{\tau,T}:X_{\tau,T}\rightarrow 2^{X_{\tau,T}^*}\setminus\{\emptyset\}$ is (generalized) pseudomonotone on $W_{\tau,T}$, i.e.
\begin{itemize} \item[]a) for every
$y\in X_{\tau,T}$ the set $\mathcal{A}_{\tau,T}(y)$ is a nonempty, convex and weakly compact one in $X_{\tau,T}^*$;
\item[]b) $\mathcal{A}_{\tau,T}$ is upper
semi-continuous from every finite dimensional subspace $X_{\tau,T}$ into $X_{\tau,T}^*$ endowed with the weak topology;
\item[]c) if a sequence $\{y_n,d_n\}_{n\ge
1}\subset W_{\tau,T} \times X_{\tau,T}^*$ converges weakly in $W_{\tau,T}\times X_{\tau,T}^*$ towards $(y,d)\in W_{\tau,T}\times X_{\tau,T}^*$, $d_n\in
\mathcal{A}_{\tau,T}(y_n)$ for any $n\ge 1$, and $\slim_{n\to+\infty}\langle d_n,y_n-y\rangle_{X_{\tau,T}}\le 0$, then $d\in
\mathcal{A}_{\tau,T}(y)$ and \\$\lim_{n\to+\infty} \langle d_n, y_n\rangle_{X_{\tau,T}}=\langle d, y\rangle_{X_{\tau,T}}$.
\end{itemize}
\end{itemize}

Formula (\ref{Warningeq:3}) and the definition of the derivative for an element from $\mathcal{D}([\tau,T];V^*)$ yield that each weak solution $y\in X_{\tau,T}$ of
Problem (\ref{eq:1}) on $[\tau,T]$ belongs to the space $W_{\tau,T}$ and $y'+\mathcal{A}_{\tau,T}(y)\ni \bar{0}$.   {On the contrary, suppose that} $y\in W_{\tau,T}$
satisfies the last inclusion, then $y$ is a weak solution of Problem (\ref{eq:1}) on $[\tau,T]$.

Assumption (H1),  {properties} (P1)--(P3), and Denkowski, Mig${\rm\acute{o}}$rski, and Papageorgiou \cite[Theorem~1.3.73]{DMP} (see also Zgurovsky, Mel'nik, and
Kasyanov \cite[Chapter~2]{ZMK2} and references therein) provide the existence of a weak solution of Cauchy problem (\ref{eq:1}) with initial data
$y(\tau)=y^{(\tau)}$ on the interval $[\tau,T]$, for any $y^{(\tau)}\in H$.

For fixed $\tau$ and $T$, such that $0\le\tau<T<+\infty$, we denote
\[
\mathcal{D}_{\tau,T}(y^{(\tau)})=\{y(\cdot)\ | \ y\mbox{ is a weak solution of (\ref{eq:1}) on } [\tau,T], \ y(\tau)=y^{(\tau)}\},\quad y^{(\tau)}\in H.
\]
We remark that $\mathcal{D}_{\tau,T}(y^{(\tau)})\ne \emptyset$ and $\mathcal{D}_{\tau,T}(y^{(\tau)})\subset W_{\tau,T}$, if $0\le\tau<T<+\infty$ and
$y^{(\tau)}\in H$. Moreover, the concatenation of weak solutions of Problem (\ref{eq:1}) is a weak solutions too, i.e. if $0\le\tau<t<T$, $y^{(\tau)}\in H$,
$y(\cdot)\in \mathcal{D}_{\tau,t}(y^{(\tau)})$, and $v(\cdot)\in \mathcal{D}_{t,T}(y(t))$, then
\[z(s)=\left\{
\begin{array}{ll}
y(s),&s\in[\tau,t],\\
v(s),&s\in[t,T],
\end{array}\right.\] belongs to $\mathcal{D}_{\tau,T}(y^{(\tau)})$; cf. Zgurovsky et al. \cite[pp.~55--56]{ZMK3}.

Gronwall's lemma provides that for any finite time interval $[\tau,T]\subset \R_+$ each weak solution $y$ of Problem (\ref{eq:1}) on $[\tau,T]$ satisfies the
estimates
\begin{equation}\label{Warningeq:6}
\|y(t)\|_H^2 -2 \int_0^t \beta(\xi)d\xi+ 2\alpha \int_s^t\|y(\xi)\|_V^p d\xi \le \|y(s)\|_H^2- 2 \int_0^s \beta(\xi)d\xi,
\end{equation}
\begin{equation}\label{eq:7}
\|y(t)\|_H^2\le\|y(s)\|_H^2 e^{-2\alpha\gamma (t-s)}+2 \int_s^t (\beta(\xi)+\alpha\gamma)e^{-2\alpha\gamma (t-\xi)}d\xi,
\end{equation}
where $t,s\in[\tau,T]$, $t\ge s$; $\gamma>0$ is a constant such that $\gamma\|u\|_{H}^p\le \|u\|_V^p$ for any $u\in V$; cf. Zgurovsky et al.
\cite[p.~56]{ZMK3}. In the proof of (\ref{eq:7}) we used the inequality $\|u\|_{H}^2-1\le \|u\|_{H}^p$ for any $u\in H$.

Therefore, any weak solution $y$ of Problem (\ref{eq:1}) on a finite time interval $[\tau,T]\subset \R_+$ can be extended to a global one, defined on
$[\tau,+\infty)$. For arbitrary $\tau \ge 0$ and $y^{(\tau)}\in H$ let $\mathcal{D}_\tau(y^{(\tau)})$ be the set of all weak solutions (defined on
$[\tau,+\infty)$) of Problem (\ref{eq:1}) with initial data $y(\tau)=y^{(\tau)}$. Let us consider the family $\mathcal{K}_\tau^+=\cup_{y^{(\tau)}\in
H}\mathcal{D}_\tau(y^{(\tau)})$ of all weak solutions of Problem (\ref{eq:1}) defined on the semi-infinite time interval $[\tau,+\infty)$.

Properties (P1)--(P2) imply assumptions (A1) and (A2). Therefore, Theorem~\ref{t:main} yields that the m-semiflow $G$, defined in (\ref{eq:5}), has a compact global attractor $\Re$ in the phase space $H.$
}
\section*{Acknowledgments} Authors thank Prof. Oleksiy V. Kapustyan for useful discussions during the preparation of this paper. Authors thank the anonymous {referees for their} constructive suggestions and remarks.
The research was partially supported
by the National Academy of Sciences of Ukraine
under grant 2284/15 and by Grant of the President of Ukraine GP/F61/017.




\begin{thebibliography}{99}

\bibitem{BaVi}
\newblock A.V. Babin and M.I. Vishik,
\newblock \emph{Attractors of Evolution Equations} (Russian),
\newblock Nauka, Moscow, 1989

\bibitem{introPersp}
\newblock F. Balibrea,  T. Caraballo,
P.E. Kloeden and J. Valero,
\newblock Recent developments in dynamical systems: three perspectives,
\newblock \emph{International Journal of Bifurcation and Chaos}, (2010)
DOI:10.1142/S0218127410027246

\bibitem{Ball97}
\newblock J.M. Ball,
\newblock Continuity properties and global attractors of generalized semiflows
and the Navier-Stokes equations.
\newblock \emph{Nonlinear Science}, \textbf{7}(1997), 475--502
Erratum, ibid 8:233,1998. Corrected version appears in
\emph{Mechanics: from Theory to Computation. Springer
Verlag}, (2000), 447--474.


\bibitem{VCh0}
\newblock V.V. Chepyzhov and M.I. Vishik,
\newblock Trajectory attractors for evolution equations,
\newblock \emph{C.R.Acad.Sci. Paris. Serie I}, \textbf{321}(1995) 1309--1314

\bibitem{VCh}
V.V. Chepyzhov and M.I. Vishik,
\newblock Evolution equations and their
trajectory attractors,
\newblock \emph{J. Math. Pures Appl.}, \textbf{76}(1997) 913--964

\bibitem{CV3}
\newblock V.V. Chepyzhov and M.I. Vishik,
\newblock Trajectory and global
attractors for 3D Navier-Stokes system,
\newblock \emph{Mat. Zametki.} (2002) DOI:10.1023/A:1014190629738

\bibitem{VChRI}
\newblock V.V. Chepyzhov and M.I. Vishik,
\newblock \emph{Attractors for
equations of mathematical physics},
\newblock American Mathematical Society, Providence RI, 2002

\bibitem{VCh1}
\newblock V.V. Chepyzhov and M.I. Vishik,
\newblock Trajectory attractor for reaction-diffusion system with diffusion coefficient vanishing in time,
\newblock \emph{Discrete and Continuous Dynamical Systems}, \textbf{27(4)} (2010), 1498--1509

\bibitem{chapter2cl}
\newblock  {F.H. Clarke,}
\newblock \textit{Optimization and Nonsmooth
Analysis.}
\newblock John Wiley $\&$ Sons, Inc., New York, 1983

\bibitem{DMP}
\newblock  {Z. Denkowski, S. Mig${\rm\acute{o}}$rski and N.S. Papageorgiou},
\newblock \textit{An Introduction to Nonlinear Analysis: Applications}.
\newblock Kluwer Academic/Plenum Publishers,
Boston, 2003

\bibitem{GGZ}
\newblock H. Gajewski, K. Gr\"{o}ger and K. Zacharias,
\newblock \emph{Nichtlineare operatorgleichungen und operatordifferentialgleichungen},
\newblock Akademie-Verlag, Berlin, 1978

\bibitem{dcds8}	
\newblock M.O. Gluzman, N.V. Gorban and P.O. Kasyanov,
\newblock Lyapunov type functions for classes of autonomous parabolic feedback control problems and applications,
\newblock \emph{Applied Mathematics Letters} (2014)  DOI:10.1016/j.aml.2014.08.006

\bibitem{GKK}
\newblock N.V. Gorban, O.V. Kapustyan and P.O. Kasyanov,
\newblock Uniform trajectory attractor for non-autonomous reaction-diffusion equations with Caratheodory's nonlinearity,
\newblock \emph{Nonlinear Analysis, Theory, Methods and Applications}, \textbf{98}(2014) 13--26 DOI: 10.1016/j.na.2013.12.004.

\bibitem{dcds4}	
\newblock N.V. Gorban, O.V. Kapustyan, P.O. Kasyanov and L.S. Paliichuk,
\newblock On global attractors for autonomous damped wave equation with discontinuous nonlinearity, \newblock \emph{Continuous and Distributed Systems. Theory and Applications, Solid Mechanics and its Applications} \textbf{211} (2014) DOI:10.1007/978-3-319-03146-0$\_$16

    \bibitem{dcds7}	
    \newblock N.V. Gorban and P.O. Kasyanov,
    \newblock On regularity of all weak solutions and their attractors for reaction-diffusion inclusion in unbounded domain,
    \newblock \emph{Continuous and Distributed Systems. Theory and Applications, Solid Mechanics and its Applications} \textbf{211} (2014) DOI: 10.1007/978-3-319-03146-0$\_$15

\bibitem{Hale}
\newblock J.K. Hale,
\newblock \emph{Asymptotic behavior of dissipative systems},
\newblock AMS, Providence, RI, 1988

\bibitem{KapValJDE}
\newblock A.V. Kapustyan and J. Valero,
\newblock Weak and strong attractors for the 3D Navier–Stokes system,
\newblock \textit{Journal of Differential Equations}, \textbf{240(2)} (2007),  249--278  DOI:10.1016/j.jde.2007.06.008



\bibitem{KapVal09}
\newblock A.V. Kapustyan and J. Valero,
\newblock On the Kneser property for the
complex Ginzburg-Landau equation and the Lotka-Volterra system with diffusion.
\newblock \emph{J Math Anal Appl.}, \textbf{357} (2009), 254--272


\bibitem{IO}
\newblock G. Iovane, A.V. Kapustyan and J. Valero,
\newblock Asymptotic behavior
of reaction-diffusion equations with non-damped impulsive effects.
\newblock \emph{Nonlinear Analysis.}, \textbf{68} (2008),  2516--2530

\bibitem{KKV}
\newblock O.V. Kapustyan, P.O. Kasyanov and J. Valero,
\newblock Pullback attractors for a
class of extremal solutions of the 3D Navier-Stokes equations,
\newblock \emph{Journal of Mathematical Analysis and Applications}, (2011) DOI:10.1016/j.jmaa.2010.07.040



\bibitem{Kaskapvalz}
\newblock A.V. Kapustyan,  P.O. Kasyanov, J. Valero and M.Z. Zgurovsky,
\newblock Sructure  of uniform global attractor for general non-autonomous reaction-diffusion system,
\newblock \emph{Continuous and Distributed Systems: Theory and Applications, Solid Mechanics and Its Applications}, \textbf{211}(2014), 163--180



\bibitem{dcds3}	
O.V. Kapustyan, P.O. Kasyanov and J. Valero,
\newblock Structure and regularity of the global attractor of a reaction-diffusion equation with non-smooth nonlinear term,
\newblock \emph{Discrete and Continuous Dynamical Systems, Series A}, (2014) DOI: 10.3934/dcds.2014.34.4155

\bibitem{dcds6}	
\newblock O.V. Kapustyan, P.O. Kasyanov and J. Valero,
\newblock Regular solutions and global attractors for reaction-diffusion systems without uniqueness,
\newblock \emph{Communications on Pure and Applied Analysis} (2014) DOI: 10.3934/cpaa.2014.13.1891

\bibitem{dcds1}	
\newblock P.O. Kasyanov, V.S. Mel'nik and S. Toscano,
\newblock Solutions of Cauchy and periodic problems for evolution inclusions with multi-valued $w_{\lambda_0}$-pseudomonotone maps,
\newblock \emph{Journal of Differential Equations}, (2010) DOI: 10.1016/j.jde.2010.05.008

\bibitem{KasC}
\newblock P.O. Kasyanov,
\newblock Multivalued dynamics of solutions of an autonomous differential-operator inclusion with pseudomonotone nonlinearity,
\newblock \emph{Cybernetics and Systems Analysis}, \textbf{47} (2011), 800--811

\bibitem{KasMZ}
\newblock P.O. Kasyanov,
\newblock Multivalued dynamics of solutions of autonomous operator differential equations with pseudomonotone nonlinearity,
\newblock \emph{Mathematical Notes}, \textbf{92} (2012), 205--218



\bibitem{KasSVAN}
\newblock P.O. Kasyanov, L. Toscano and N.V. Zadoianchuk,
\newblock Regularity of Weak Solutions and Their Attractors for a Parabolic Feedback Control Problem,
\newblock \emph{Set-Valued and Variational Analysis}, (2013) DOI: 10.1007/s11228-013-0233-8

\bibitem{KVal}
\newblock P.E. Kloeden, P. Marin-Rubio and J. Valero,
\newblock The Envelope Attractor
of Non-strict Multivalued Dynamical Systems with Application to the
3D Navier-Stokes and Reaction-Diffusion Equations,
\newblock \textit{Set-Valued and
Variational Analysis}, \textbf{ 21 }(2013) 517--540 DOI:
10.1007/s11228-012-0228-x

\bibitem{Krasn}
\newblock M.A. Krasnosel'skii,
\newblock \emph{Topological Methods in the Theory of Nonlinear Integral Equations.}
\newblock (International Series of Monographs on Pure and Applied Mathematics, Vol. 45)
Oxford/London/New York/Paris, 1964

\bibitem{Lad3}
\newblock O.A. Ladyzhenskaya,
\newblock \emph{Attractors for semigroups and evolution equations},
\newblock Cambridge University Press, Cambridge, 1991

\bibitem{MeVa1997}
\newblock V.S. Melnik and J. Valero,
\newblock On attractors of multivalued semi-flows and generalized differential equations,
\newblock \emph{Set-Valued Anal.}, \textbf{6(1)} (1998), 83--111

\bibitem{MeVa2000}
\newblock V.S. Mel'nik and J. Valero,
\newblock On global attractors of multivalued
semiprocesses and non-autonomous evolution inclusions,
\newblock \emph{Set-Valued Anal.} (2000) DOI:10.1023/A:1026514727329

\bibitem{JGO2000}
\newblock S. Mig${\rm\acute{o}}$rski and A. Ochal,
\newblock Optimal Control of Parabolic
Hemivariational Inequalities,
\newblock \textit{Journal of Global Optimization}, \textbf{17} (2000), 285--300

\bibitem{Mig}
\newblock S. Mig${\rm\acute{o}}$rski,
\newblock Boundary
hemivariational inequalities of hyperbolic type and applications,
\newblock \textit{Journal of Global Optimization}, \textbf{31(3)} (2005), 505--533

\bibitem{Pana}
\newblock P.D. Panagiotopoulos,
\newblock \textit{Inequality Problems in Mechanics
and Applications. Convex and Nonconvex Energy Functions},
\newblock Birkhauser, Basel, 1985

\bibitem{Sell}
\newblock G.R. Sell,
\newblock Global attractors for the three-dimensional Navier-Stokes equations,
\newblock \emph{J. Dyn. Diff. Eq.}, \textbf{8(12)} (1996), 1--33

\bibitem{smal}
\newblock J. Smoller,
\newblock \emph{Shock Waves and Reaction-Diffusion Equations (Grundlehren der mathematischen Wissenschaften)},
\newblock Springer-Verlag, New York, 1983

\bibitem{Sohr}
\newblock H. Sohr,
\newblock \textit{The Navier-Stokes Equations. An Elementary Functional Analytic Approach},
\newblock Verlag, Birkh$\rm{\ddot{a}}$user, 2001.

\bibitem{Temam}
\newblock R. Temam,
\newblock \textit{Navier-Stokes equations},
\newblock North-Holland,
Amsterdam, 1979.

\bibitem{Tem}
\newblock R. Temam,
\newblock \emph{Infinite-Dimensional Dynamical Systems in Mechanics and Physics},
\newblock Appl. Math. Sci., Springer-Verlag, New York, 1988

\bibitem{KapustValero06}
\newblock J. Valero and A.V. Kapustyan,
\newblock On the connectedness and
asymptotic behaviour of solutions of reaction-diffusion systems.
\newblock \emph{J Math Anal Appl.} (2006) DOI:10.1016/j.jmaa.2005.10.042

\bibitem{VChZ}
\newblock M.I. Vishik, S.V. Zelik and V.V. Chepyzhov,
\newblock Strong Trajectory Attractor for a Dissipative Reaction-Diffusion System,
\newblock \emph{Doklady Mathematics},
\textbf{82(3)} (2010), 869--873

\bibitem{Warga}
\newblock J. Warga,
\newblock \emph{Optimal control of differential and functional equations},
\newblock Academic Press, 1972

\bibitem{ZMK2}
\newblock M.Z. Zgurovsky, V.S. Mel'nik and P.O. Kasyanov,
\newblock \emph{Evolution inclusions and variation inequalities for Earth data processing II},
\newblock Springer, Berlin, 2011

\bibitem{ZgurAML}
\newblock M.Z. Zgurovsky, P.O. Kasyanov and N.V. Zadoianchuk (Zadoyanchuk),
\newblock Long-time behavior of solutions for quasilinear hyperbolic hemivariational inequalities with application to piezoelectricity problem,
\newblock \emph{Applied Mathematics Letters}, \textbf{25(10)} (2012), 1569--1574

\bibitem{ZMK3}
\newblock M.Z. Zgurovsky, P.O. Kasyanov, O.V. Kapustyan, J. Valero and N.V. Zadoianchuk,
\newblock \emph{Evolution inclusions and
variation Inequalities for Earth data processing III},
\newblock Springer, Berlin, 2012

\bibitem{dcds5}	
\newblock M.Z. Zgurovsky and P.O. Kasyanov,
\newblock Multivalued dynamics of solutions for autonomous operator differential equations in strongest topologies,
\newblock \emph{Continuous and Distributed Systems. Theory and Applications. Solid Mechanics and its Applications}, \textbf{211} (2014) DOI:10.1007/978-3-319-03146-0$\_$11

\bibitem{dcds9}
\newblock M.Z. Zgurovsky and P.O. Kasyanov,
\newblock Evolution inclusions in Nonsmooth systems with applications for Earth data processing: Uniform trajectory attractors for non-autonomous evolution inclusions solutions with pointwise pseudomonotone mappings,
\newblock \emph{Advances in Global Optimization, Springer Proceedings in Mathematics and Statistics} (2015) DOI: 10.1007/978-3-319-08377-3$\_$28



\end{thebibliography}
\end{document}